\newtheorem{lema}{Lemma}[section]
\newtheorem{prop}[lema]{Proposition}
\newtheorem{coro}[lema]{Corollary}
\theoremstyle{definition}
\newtheorem{defi}[lema]{Definition}
\newtheorem{rema}[lema]{Remark}
\newtheorem{exam}[lema]{Example}
\newtheorem{problem}[lema]{Problem}
\newcommand{\card}{\operatorname{card}}
\newcommand{\Aff}{\operatorname{Aff}}
\newcommand{\Aut}{\operatorname{Aut}}
\newcounter{teoremaganso}
\newtheorem {bigtheo} [teoremaganso] {Theorem}
\def\sideremark#1{\ifvmode\leavevmode\fi\vadjust{\vbox to0pt{\vss % the remark
      \hbox to 0pt{\hskip\hsize\hskip1em           %                will appear only
 \vbox{\hsize2cm\tiny\raggedright\pretolerance10000%                on the side
 \noindent #1\hfill}\hss}\vbox to8pt{\vfil}\vss}}}%
\newcommand{\C}{\mathbb{C}}
\title[Infinitesimal Center Problem]{
Infinitesimal Center Problem on zero cycles \\ and \\ the composition conjecture 
%Zero dimensional iterated abelian integrals and persistent centers
}
\author{A. \'Alvarez, J.L. Bravo, C. Christopher, P. Marde\v si\'c}
\thanks{The first two authors are supported by Ministerio de Economía y Competitividad through the project MTM2017-83568-P (AEI/ERDF, EU) and also partially supported by Junta de Extremadura/FEDER Grant Number IB18023. The first and second authors are also partially supported by Junta de Extremadura/FEDER Grants Numbers GR18001 and GR18023, respectively.
The last author was supported by Croatian Science Foundation (HRZZ) grant PZS-2019-02-3055 from
Research Cooperability funded by the European Social Fund and by  
EIPHI Graduate School (contract ANR-17-EURE-0002).
}
\subjclass[2010]{34C08 (34C07)}
\keywords{infinitesimal center; tangential center; Abelian integral; composition conjecture; monodromy}
\begin{document}

\begin{abstract}
We study the analogue of the classical infinitesimal center problem in the plane, but
for zero cycles. We define the displacement function in this context and prove that it is identically zero if and only if the deformation has a composition 
factor. That is,  we prove that here the composition conjecture is true, in contrast with the tangential center problem on zero cycles. Finally, we give 
examples of applications of our results. 
\end{abstract}

\maketitle

\section{Motivation and the main result}

The aim of this paper is to solve the zero dimensional version of the infinitesimal center problem. Zero dimensional means that the problem
is not in terms of families of planar vector fields and one dimensional closed curves (one cycles), but in terms of zero cycles, as we will 
explain in the sequel. As the problem can be stated in purely algebraic terms, we first introduce the problem and then, we detail the 
motivation and some applications.

\subsection{Infinitesimal center and Hilbert 16th problems}

Given a polynomial $f\in\C[z]$, of degree $m\geq 1$, a zero cycle $C$ of $f$ is a multivalued function
\begin{equation*}%\label{cycle}
C(t)=\sum_{i=1}^m n_i z_i(t), 
\end{equation*}
where the $z_i(t)$ are roots of $f(z_i(t))=t$, and $\sum_{i=1}^m n_i=0$. Consider a small polynomial deformation $f(z)+\epsilon g(z)=t$ of $f = t$ of any degree,
and the deformation induced zero cycle,
\[
C_\epsilon(t)=\sum_{i=1}^m n_i z_i(t,\epsilon), 
\]
where $z_i(t,\epsilon)$, $i=1,\ldots,m$, are the multivalued functions determined by $(f+\epsilon g)(z_i(t,\epsilon))=t$, $z_i(t,0)=z_i(t)$.
We shall always assume that the variable $t$ is restricted to non-critical values of the polynomial $f(z)$ so that the deformation is well-defined.  
\begin{defi} \label{def:displ}
The \emph{displacement function $\Delta$ of the deformation $f(z)+\epsilon g(z)=t$ along the zero cycle $C$} is defined by
\begin{equation*}%\label{Delta}
\Delta(t,\epsilon)=\int_{C_\epsilon(t)} f,
\end{equation*}
where by definition
\[
\int_{C_\epsilon(t)} f=\sum_{i=1}^m n_i f(z_i(\epsilon,t)).
\]
Note that, since $\sum_{i=1}^m n_i=0$, 
\[
\Delta(t,\epsilon)=\int_{C_\epsilon(t)} f=\int_{C_\epsilon(t)} \left(f+\epsilon g-\epsilon g\right)=-\epsilon\int_{C_\epsilon(t)} g.
\]
%In analogy with the $1$-cycle systems, we use the zero dimensional integral notations,
%$$ 
%\int_{C_{\epsilon}(t)}f:=\sum_{i=1}^m n_i f(z_i(\epsilon,t)).
%$$
%where $(f+\epsilon g)(z_i(\epsilon,t))=t $, $z_i(0,t)=z_i(t)$.
\end{defi}
 
We formulate two problems: 

\begin{problem} \label{infcen}\emph{The infinitesimal center problem for zero cycles:}\newline
Caracterize those polynomials $f$, with their cycles $C$ and deformations $g$, such that the displacement function $\Delta$ of the deformation along $C$ is identically 
zero. 
\end{problem}

\begin{problem}\label{inf16} \emph{The infinitesimal Hilbert 16\textsuperscript{th} problem for zero cycles:}\newline
Bound the number of zeros of the displacement function $\Delta$ of the deformation $f+\epsilon g$ along cycles $C$ of $f$ as a function of the degree of 
$f(z)+\epsilon g(z)$ as a polynomial in $z$. 
\end{problem}
 
The two problems are toy examples of the corresponding two problems for small polynomial deformations of Hamiltonian systems in $\C^2$ along one 
cycles, which can be traced back to Poincar\'e and Arnold \cite{A} respectively. We are considering only the polynomial case, but the problems
and some of the results can be extended to a more general setting.

\subsection{Solution of the infinitesimal center problem}

The main result we present here is the solution of the infinitesimal center problem for zero cycles. In order to state our result and to relate it with 
previous results we need to introduce some notations. 

\medskip 

Writing the displacement function in power series of $\epsilon$, we obtain
\begin{equation*}%\label{Delta2}
\Delta(t,\epsilon)=\sum_{i=1}^\infty\epsilon^iM_i(t).
\end{equation*}
We call $M_i(t)$ the $i$-th Melikov function. 
It is easy to check 
that the first Melnikov function is
\[
M_1(t)=-\int_{C(t)}g=\sum_{i=1}^m n_i g(z_i(t)).
\]
That is, a \emph{zero dimensional abelian integral} for the polynomial $g$ along a zero cycle $C(t)=\sum_{i=1}^m n_i z_i(t)$ of $f$, in the terminology of 
Gavrilov and Movasati~\cite{GM}. 

\medskip 

Our problem is analogous to the problem studied by Gavrilov and Movasati in \cite{GM}, but we study the displacement function $\Delta$ whereas they studied 
the first Melnikov function $M_1$. They formulated problems corresponding to Problems \ref{infcen} and \ref{inf16} in terms of the first Melnikov function 
$M_1$ instead of the displacement function $\Delta$. We call their first order problems the \emph{tangential problems} and reserve the adjective 
\emph{infinitesimal} for the study of the displacement function $\Delta$ of a deformation of the identically vanishing function. Example~\ref{exam:1} in 
the final section illustrates the two problems. 
 
\medskip 

Gavrilov and Movasati~\cite{GM} also provided a solution for the tangential Hilbert 16th problem and a solution of a special case of the tangential center 
problem. The complete solution of the tangential center problem is given in \cite{ABM1} (under a generic condition) and in \cite{GP}. See also \cite{ABM2}.
 
\medskip  

A key tool for the solution of the tangential center problem is the composition 
condition. Next, we will define it in our context.

Assume that there exist $h,\tilde f,\tilde g\in\C[z]$ such that $f(z)=\tilde f(h(z))$, $g(z)=\tilde g(h(z)$.
Then, for each cycle $C_\epsilon(t)=\underset{i=1}{\stackrel{m}{\sum}} n_i z_i(t,\epsilon)$ of $f+\epsilon g$, we
define the \emph{projected cycle} $h(C_\epsilon(t))$ of $C_\epsilon$ by $h$ as the cycle of the perturbation $\tilde f+\epsilon \tilde g = t$ defined by
\[
h(C_\epsilon(t)) =  \underset{h(z_i(t,\epsilon) )}{\sum} \left( \underset{h(z_j(t,\epsilon) )=h(z_i(t,\epsilon) )}{\sum} n_j \right) h(z_i(t,\epsilon) ).
\]
We say that this projected cycle is \emph{trivial} if %$h(C_\epsilon)\equiv 0$. That is, if 
\[ 
\underset{h(z_j(t,\epsilon))=h(z_i(t,\epsilon))}{\sum} n_i =0,\quad i=1,\ldots,m.
\]

\begin{defi}%\label{composition}
We say that $f,g\in\C[z]$ and a cycle $C$ of $f$ satisfy the {\em composition condition} if there exist polynomials
$\tilde f,\tilde g,h \in \mathbb{C}[z]$ such that 
\end{defi}
\begin{equation*}%\label{eq:cc}
f(z)=\tilde f(h(z)),\quad g(z )=\tilde g(h(z )),
\end{equation*}
\\and for every $\epsilon$, the perturbed cycle $C_\epsilon$ projected by $h$ is trivial. That is,
\begin{equation}\label{eq:cc2}
\sum_{h(z_i(t,\epsilon) )=h(z_j(t,\epsilon) )} n_i = 0, \quad i=1,\ldots,m.
\end{equation}

\medskip 

Now, we can state our main result.

\begin{bigtheo}\label{theo:main}
A deformation, $f+\epsilon g = t$, has an infinitesimal center for a cycle $C$ of $f$, i.e., $\Delta(t,\epsilon)\equiv 0$ for all $t$ and all $\epsilon$ small enough, if and only if $f$, $g$, $C$ satisfy the composition condition. 
\end{bigtheo}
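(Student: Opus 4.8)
The plan is to establish the two directions separately, with the "if" direction being essentially formal and the "only if" direction carrying all the weight. For the easy direction, suppose $f=\tilde f\circ h$, $g=\tilde g\circ h$, and the perturbed cycle $C_\epsilon$ projects trivially under $h$. Then $f(z_i(t,\epsilon)) = \tilde f(h(z_i(t,\epsilon)))$ depends only on the value $h(z_i(t,\epsilon))$, so grouping the points $z_i$ of the cycle according to the value of $h$ and using \eqref{eq:cc2}, one gets $\Delta(t,\epsilon)=\sum_i n_i f(z_i(t,\epsilon)) = \sum_{\text{values }v} \big(\sum_{h(z_i)=v} n_i\big)\tilde f(v) = 0$. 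This needs only that the grouping is consistent, i.e. that the partition of the index set by the value of $h(z_i(t,\epsilon))$ is locally constant in $(t,\epsilon)$, which holds on the complement of the discriminant locus.

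For the converse, assume $\Delta(t,\epsilon)\equiv 0$. Since $\Delta(t,\epsilon) = -\epsilon\int_{C_\epsilon(t)} g$, this says $\sum_i n_i\, g(z_i(t,\epsilon)) \equiv 0$ for all small $\epsilon$ and all noncritical $t$. The key idea is to exploit the monodromy of the roots $z_i(t,\epsilon)$ as $t$ turns around the critical values of $f+\epsilon g$. For $\epsilon$ fixed and generic, the monodromy group of $f+\epsilon g$ acts transitively on the $m$ roots (the polynomial is "generic" in a suitable sense), and the relation $\sum n_i g(z_i) \equiv 0$ must be preserved under this action. One then analyzes which zero cycles $C$ — i.e. which integer vectors $(n_1,\dots,n_m)$ — can be invariant, in the appropriate averaged sense, under the monodromy while killing $\int_{C_\epsilon} g$ identically. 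The structure that emerges is exactly a nontrivial block system for the monodromy group: the indices split into blocks on which $g$ is constant and the $n_i$ sum to zero. A nontrivial block system of the monodromy of a polynomial corresponds, by Ritt's theory / the Galois-theoretic description of decompositions, to a decomposition $f+\epsilon g = \tilde F_\epsilon \circ h_\epsilon$; and the fact that this holds for a whole family in $\epsilon$ with $z_i(t,0)=z_i(t)$ forces the inner map $h_\epsilon$ to be (up to an affine reparametrization absorbable into $\tilde F_\epsilon$) independent of $\epsilon$, yielding a single $h$ with $f=\tilde f\circ h$, $g=\tilde g\circ h$, and the block condition is precisely \eqref{eq:cc2}.

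Concretely I would carry out the converse in the following steps. First, reduce to $\epsilon$ a formal or small parameter and work order by order, or alternatively fix a generic small $\epsilon_0$ and study the single polynomial $f+\epsilon_0 g$; show that the identity $\int_{C_\epsilon}g\equiv 0$ combined with $\int_{C_\epsilon}f\equiv 0$ (trivially true) and varying $\epsilon$ gives $\int_{C_\epsilon}(f+\epsilon g)^k\,\equiv 0$-type moment conditions, or more directly that $g$ restricted to each monodromy orbit structure is controlled. Second, use analytic continuation in $t$: the vanishing of a finite sum $\sum n_i g(z_i(t,\epsilon))$ along all of the punctured $t$-line, together with the permutation action of the fundamental group on the branches, forces the "support" of the cycle to be a union of monodromy-invariant packets. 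Third, translate monodromy-invariance into a polynomial decomposition via the correspondence between imprimitivity systems of $\mathrm{Gal}(f+\epsilon g - t / \C(t))$ and right composition factors of $f+\epsilon g$ (this is the classical dictionary also used in \cite{GM}, \cite{ABM1}). Fourth, check that the decomposition can be taken uniform in $\epsilon$ — here one uses that $h_\epsilon$ must specialize at $\epsilon=0$ to a decomposition compatible with the original cycle $C$, and rigidity of decompositions up to affine equivalence pins down a common $h$. Finally, verify that the resulting data satisfies the triviality condition \eqref{eq:cc2}, which is immediate from how the packets were defined.

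The main obstacle I expect is the third and fourth steps: passing cleanly from the analytic identity $\sum n_i g(z_i(t,\epsilon))\equiv 0$ to an honest polynomial composition factor, and then making that factor uniform in $\epsilon$. The subtlety is that a priori the monodromy could force only a weaker "virtual" or "averaged" block structure (the coefficients $\sum_{h(z_j)=h(z_i)} n_j$ need not individually vanish before one proves they do), and one must rule out the degenerate possibility that cancellation in $\sum n_i g(z_i)$ happens "accidentally" across different monodromy orbits rather than structurally within blocks — it is precisely the contrast with the tangential case (where such accidents, the Ritt-type examples, genuinely occur) that makes the infinitesimal version cleaner, and capturing why the extra $\epsilon$-dependence removes the accidental solutions is the heart of the argument.
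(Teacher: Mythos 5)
Your ``if'' direction and your overall strategy for the converse (monodromy of $f+\epsilon g$, imprimitivity versus composition factors, then uniformity in $\epsilon$) do follow the same general line as the paper, but there is a genuine gap: the two steps you yourself flag as ``the heart of the argument'' are exactly the ones left unproven, and some of your intermediate claims are not correct as stated. First, it is not true that the vanishing of $\sum_i n_i\,g(z_i(t,\epsilon))$ forces a nontrivial block system: the monodromy group may be primitive. The paper deals with this by showing that monodromy around $t=\infty$ produces an $n$-cycle, so the Burnside--Schur theorem applies and the group is either two-transitive, isomorphic to the monodromy group of $z^p$ or of $T_p$ with $p$ prime, or imprimitive (Proposition~\ref{prop:BS}); the two-transitive case is then settled by averaging the vanishing relation over point stabilizers, which forces $f(z_i)=f(z_j)$ on the support of the cycle and yields a composition factor via L\"uroth (Proposition~\ref{2trans}), while in the $z^p$ and $T_p$ cases one shows $f+\epsilon g=K(\epsilon)(z-a)^p$ or a fixed Chebyshev polynomial up to affine factors, the projection by $f$ itself being trivial. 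Your sketch covers none of these primitive cases, yet they are unavoidable before any block system can be assumed.

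Second, for the $\epsilon$-uniformity of the inner factor you appeal to ``rigidity of decompositions up to affine equivalence'', but decompositions of a single polynomial are only rigid up to M\"obius insertions and a priori $h_\epsilon$ could genuinely vary with $\epsilon$; the paper's mechanism is different and essential. It builds from the start the monodromy group of the whole family over the $(t,\epsilon)$-base (Ehresmann fibration plus Thom's isotopy lemma, and its identification with the Galois group over $\C(t,\epsilon)$, Corollary~\ref{coro:G}), so that L\"uroth applied over $\C(\epsilon)$ gives one decomposition $F(h(z,\epsilon),\epsilon)$; then the elementary but crucial observation that $f+\epsilon g$ has degree one in $\epsilon$ while $\deg_z F>1$ forces $h$ to be independent of $\epsilon$ and $F$ affine in $\epsilon$. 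Finally, your claim that the triviality condition \eqref{eq:cc2} is ``immediate from how the packets were defined'' is not right: after one projection the cycle $h(C_\epsilon)$ may well be nontrivial, and the paper must iterate the projection, strictly decreasing the degree, until either the projected cycle is trivial or the projected monodromy becomes primitive (handled as above), the composition condition being realized only by the composed inner map $h_k\circ\cdots\circ h_1\circ h$. Without the Burnside--Schur case analysis, the degree-in-$\epsilon$ argument, and this iteration, the proposal remains a plan rather than a proof.
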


The sufficiency of the composition condition is easy. Indeed, %since $f(z)=F(h(z),0)$, 
by \eqref{eq:cc2} we get that
\[
\Delta(t,\epsilon) = \sum_{j=1}^m n_j f(z_j(t,\epsilon)) = \sum_{j=1}^m n_j \tilde f(h(z_j(t,\epsilon)))
=\int_{h\left(C_\epsilon\right)} \tilde f\equiv 0.
\]
The converse is more difficult and will be proved in the next section. 

\begin{rema}
We note that the composition conjecture is {\em not true} for the tangential center problem, as showed by Pakovich's example~\cite{P}. 
(See Examples~\ref{exam:1} and \ref{exam:2}.) It is true, however, for the tangential center problem on simple cycles (Theorem 1.7~\cite{CM}.) 
\end{rema}

\begin{rema}
In Theorem~\ref{theo:main} we assume that $f,g$ are polynomials, but the problem can also be considered for analytic functions $f,g$. In particular, the 
function $f$ could have an infinite number of isolated fibers $z_i(t)$, so the cycle sould be considered with finite support. 
\end{rema}

%We give several ilustrative examples to our results. 

\subsection{Motivation}

The motivation for this paper comes from the study of polynomial vector fields in the plane. Orbits of polynomial vector fields are relatively simple and 
most interesting are periodic orbits, which can belong to continuous families of periodic orbits, which we call by abuse a \emph{center}, or can be 
isolated, called \emph{limit cycles}. 

Two widely open classical problems, the \emph{center problem} (Poincar\'e) and the \emph{Hilbert 16-th problem}, are related to these two situations. 

The center problem asks for a geometric caracterization of polynomial vector fields in the plane having a center. The 16-th Hilbert problem asks for a 
bound, as a function of the degree, for the number of limit cycles. 

Each of these problems has its \emph{infinitesimal version}. One starts with a polynomial system $dF=0$ having a 
family of cycles $C$ called a center.
Consider its deformation 
\begin{equation}\label{deformation}
dF+\epsilon \omega=0
\end{equation}
of degree $n$, where $\omega$ is a polynomial one-form.

\smallskip
\emph{Infinitesimal center problem on $1$-cycles:} 
Find all polynomial deformations \eqref{deformation} of $dF$ preserving the center i.e. for which the family of periodic solutions is tranformed to a 
nearby family of periodic solutions. 
\smallskip 
 
\emph{Infinitesimal 16-th Hilbert problem} (Arnold~\cite{A}): Give an upper bound for the number of limit cycles born in \eqref{deformation} as a function of the degree $n$.
\smallskip 
 
In order to deal with these problems, one considers a transversal $T$ parametrized by the values of $F$, the (not necessarily closed) trajectory  $C_\epsilon(z)$ of \eqref{deformation} with end points on $T$ deforming $C(z)$ and one defines the \emph{displacement function $\Delta$  of the deformation
\eqref{deformation} by 
\begin{equation}\label{eq:displacement}
\Delta(z,\epsilon)=\int_{C_\epsilon}dF=-\epsilon\int_{C_\epsilon}\omega.
\end{equation}}

For the center problem one searches for a characterization when $\Delta$ is identically equal to zero. For the infinitesimal 16-th 
Hilbert problem, one searches for a bound for the number of isolated zeros of $\Delta$. Our Definition \ref{def:displ} of the displacement function of  a deformation along zero cycles is directly inspired by \eqref{eq:displacement}.

The previous problems also have their tangential versions, obtained by expanding in series of 
functions the function $\Delta$ with respect to powers of $\epsilon$ and asking the analogous questions for the first term $M_1(t)$ of $\Delta(t,\epsilon)$. 

A special case is when the planar system can be reduced to a family of Abel equations, 
\[
x'=A(t)x^2+\epsilon B(t)x^3,
\]
where $A$, $B$ are trigonometric polynomials. The tangential center problem for special types of cycles first appeared when studying this 
family~(see \cite{BFY99,BFY00,BFY00-2}) where, for simplicity, Briskin, Françoise and Yomdin considered $A,B$ polynomials rather than trigonometric polynomials. In that context, the 
\emph{composition conjecture} was formulated~\cite{BFY00}, conjecturing that a certain sufficent condition (the composition 
condition) for vanishing of the abelian integrals was also necessary. The composition condition defines all the irreducible components of the center 
variety in many families of Abel equations~\cite{CGM2}, in some planar systems (see \cite{ZR,ZY}), and accounts for most of the irreducible components when studying the 
tangential centers of the Abel equation at infinity~\cite{BRY}.  However, not all centers satisfy the compostion conjecture in the trigonometric Abel equation~\cite{Al}, 
or even the polynomial Abel equation~\cite{GG}.
%Pakovich \cite{P} gave a counterexample. 
%He also solved the tangential center problem on zero cycles for moments in \cite{PM}. 

The tangential center problem on zero cycles also has appeared when studying hyperelliptic planar systems~\cite{CM}.
In the last section, we show that the infinitesimal center problem also appers in these contexts, although
 in a more general version than the problem solved in this paper. 

\section{Monodromy group of perturbations and proof of the main result}

In this section we prove Theorem~\ref{theo:main}. The key of the proof will be to define 
a convenient monodromy for the deformation. To that end, we start recalling the monodromy
of a polynomial and then extending it to the deformation of a polynomial. 

Given a nonconstant polynomial $f \in \C[z]$, recall that $z_0\in\C$ is a \emph{critical point} 
of $f$ if $f'(z_0)=0$, and its associated \emph{critical value} is $t=f(z_0)$. If $t$ is not a 
critical value, we say that $t$ is \emph{regular}.

We will denote by $\Sigma'$ the set of all critical values of $f$, which is a finite set. Let $m>1$ be the degree of $f$. Then, for $t \in \C \setminus 
\Sigma'$ the set $f^{-1}(t)$ consists of $m$ different points $z_i(t)$, $i=1,\ldots,m$. By the implicit function theorem, one can push locally 
each solution $z_i(t)$ to nearby values of $t$ thus defining multi-valued analytic functions $z_i(t)$, $t \in \C \setminus \Sigma'$.

For a polynomial $f$ of degree $m$ with $\Sigma'$ the set of critical values, each loop based at $t_0 \in \mathbb{C} \backslash \Sigma'$ defines a 
permutation of the roots $z_1(t_0),\ldots, z_m(t_0)$ of $f(z)=t_0$. Thus, we have a mapping from the fundamental group $\pi_1(\mathbb{C} \backslash 
\Sigma', t_0)$ to the automorphism group $Aut(f^{-1}(t_0))$, whose image forms a group $G_f$,  called the \emph{monodromy group} of $f$. 
 
The monodromy group $G_f$ acts transitively on the fibre $f^{-1}(t_0)$ (see~\cite{CM}).
Moreover, $G_f \subseteq S_m$ is the Galois group of the Galois extension of $\mathbb{C}(t)$ by the $m$ pre-images $z_1(t),\ldots, z_m(t)$ of $t 
\in \mathbb{C} \backslash \Sigma'$ by $f$ (see, for instance, Theorem~8.12 of \cite{F}). That is,
\[
G_f = \Aut_{\mathbb{C}(t)}\mathbb{C}(z_1,\ldots, z_m).
\]
The monodromy group $G_f$ induces an action on cycles. In fact, reordering the preimages $z_i(t)$ appearing in a cycle $C$ after the action of 
the monodromy, we can consider the monodromy as acting on the coefficients $n_i$ of a zero cycle of $f$ by permuting them:
\[
\sigma(C(t)) = \sum_{i=1}^m n_i \sigma(z_i(t)) = \sum_{i=1}^m n_i z_{\sigma(i)}(t) = \sum_{i=1}^m n_{\sigma^{-1}(i)} z_i(t).
\]

\subsection{Monodromy group of a polynomial perturbation}

Now we define the monodromy group $G$ for a family of polynomials $f(z)+\epsilon g(z)$, considered as polynomials in $z$. Let us denote
\[
\begin{array}{rccl} 
F: &  \mathbb{CP}\times\C & \to & \mathbb{CP} \\ & (z, \epsilon) & \mapsto & F(z,\epsilon) =f(z)+\epsilon g(z), z\in \mathbb{C},\text{ and }F(\infty,\epsilon)=\infty
\end{array}
\]
and 
\[
\begin{array}{rccl} 
H: &  \mathbb{CP}\times\C & \to & \mathbb{CP} \times \C \\ & (z, \epsilon) & \mapsto & H(z,\epsilon) =(F(z,\epsilon),\epsilon).
\end{array}
\]

Let $D(t,\epsilon)$ be the discriminant of $f(z)+\epsilon g(z)-t$ as a polynomial in $z$, and $c(\epsilon)$ the coefficient of the monomial of the 
highest degree. Define
\[
\Sigma := \{ (t,\epsilon) \in \C^2: c(\epsilon)D(t,\epsilon)=0\}\cup \{\infty\}\times \C.
\]

The fiber of every $(t,\epsilon) \in \mathbb{CP} \times \C \setminus \Sigma$, consists of $n$ different points $(z_i(t,\epsilon),
\epsilon)$, where $n$ is the degree of $F$ as a polynomial in $z$. By Ehresmann's fibration lemma~\cite{E}, the map 
 $H: \mathbb{CP}\times\C \setminus 
H^{-1}(\Sigma) \to \C^2 \setminus \Sigma$ defines a fibration whose fibers are zero dimensional, given by the union of $n$ distinct points $z_i(\epsilon,t)$. 
%fibre bundle. 

The fact that $H$ defines a fibration guarantees that any closed path $\gamma$ in the basis $\mathbb{CP}\times\C \setminus \Sigma$ at a point 
$(t_0,\epsilon_0)$ lifts to a path  joining two points, $z_i(t,\epsilon)$ and $z_j(t,\epsilon)$, of the fiber. Moreover, closed paths homotopic to 
$\gamma$ lift to homotopic paths between the points of the fiber $z_i(t,\epsilon)$ and $z_j(t,\epsilon)$. As $\mathbb{CP}\times\C 
\setminus \Sigma$ is path-connected, its fundamental groups with different base poins are conjugated and one can consider the basepoint free homotopy 
group. We obtain a well-defined application from $\pi_1(\mathbb{CP}\times\C \setminus \Sigma, (t,\epsilon))$ to $Sym
(H^{-1}(t,\epsilon)) \simeq S_{n}$. We call the subgroup $G$ of $Sym(H^{-1}(t,\epsilon))$ thus obtained as the image of $\pi_1
(\mathbb{CP}\times\C \setminus \Sigma, (t,\epsilon))$ the \emph{monodromy group} $G$ of $f+\epsilon g$.

\medskip 

Consider also the projection $p_2\colon \mathbb{CP}\times\C \to \C$, $p_2(t,\epsilon)=\epsilon$. Define
\[
\Sigma_0=\{(t,\epsilon)\in\Sigma\colon dp_{2}|_{\Sigma=0}.\}
\]
Note that since $\Sigma$ is an algebraic variety, the projection of the points where the variety is orthogonal to the projection is just a finite set. 

%\begin{comment}
\begin{figure*}[ht]
\begin{center}
\begin{picture}(150,220)
\put(0,0){\includegraphics[width=150pt]{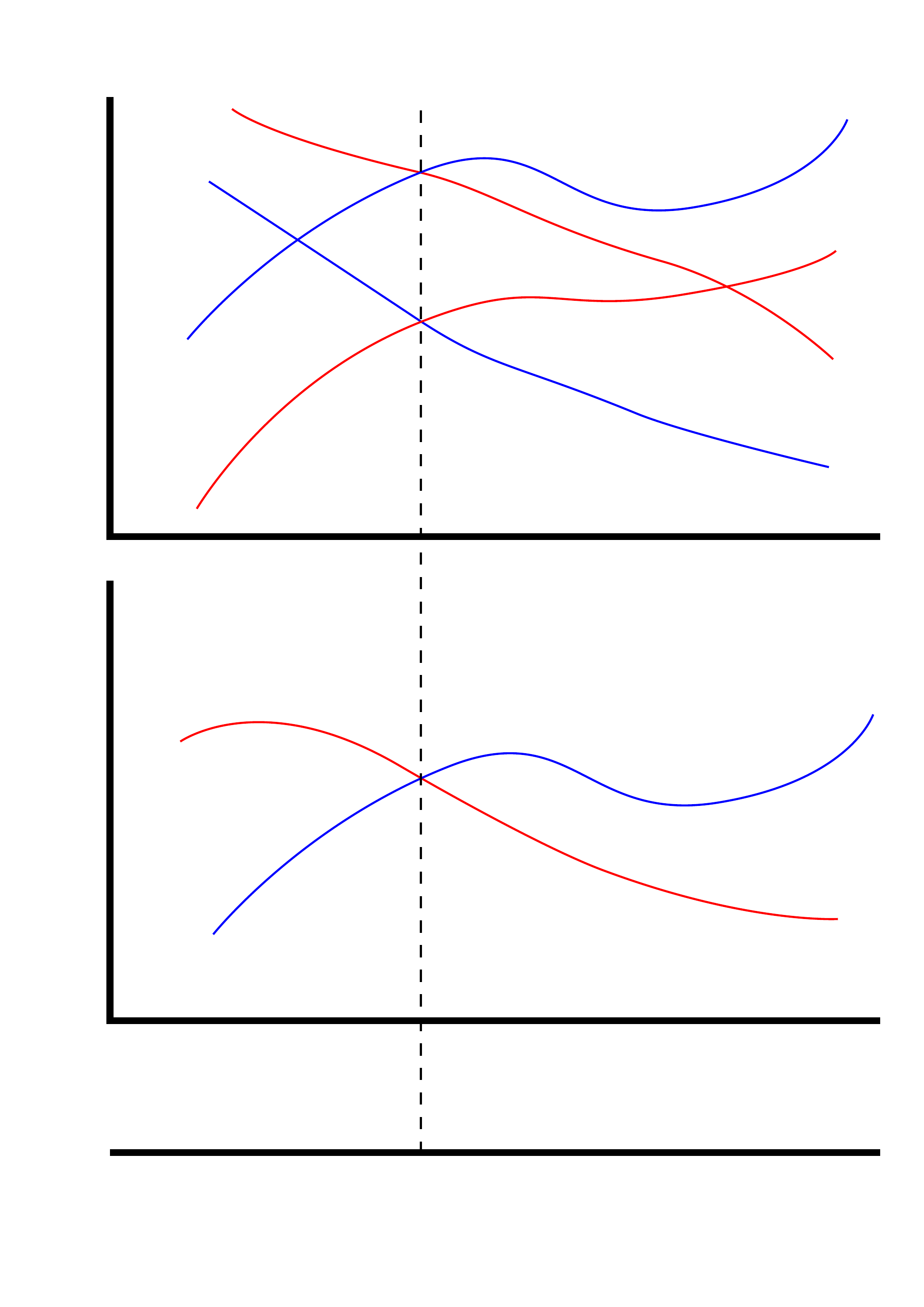}}
\put(145,17){$\epsilon$}
\put(145,40){$\epsilon$}
\put(145,118){$\epsilon$}
\put(10,190){$z$}
\put(10,110){$t$}
\put(100,190){$H^{-1}(\Sigma)$}
\put(120,85){$\Sigma$}
\put(60,72){$\Sigma_0$}
\put(60,16){$p_2(\Sigma_0)$}
\put(155,160){$\mathbb{CP}\times \C$}
\put(170,125){$\Bigg\downarrow$}
\put(180,125){$H$}
\put(155,85){$\mathbb{CP}\times \C$}
\put(170,55){$\Bigg\downarrow$}
\put(180,55){$p_2$}
\put(170,20){$\C$}
\end{picture}
\end{center}
\caption{Diagram of the fibrations}%\label{monodromy}
\end{figure*}
%\end{comment}

\begin{prop} 
Given a closed path $\gamma$ in the basis $\mathbb{CP}\times\C \setminus \Sigma$ and $\epsilon_0\in\C\backslash 
p_2(\Sigma_0)$, the path $\gamma$ is homotopic within $\mathbb{CP}\times\C \setminus \Sigma$  to a closed path in 
$(\C\setminus \Sigma_{\epsilon_0})\times\{\epsilon_0\}$, where $\Sigma_{\epsilon_0}$ is the set of critical values of $f(z)+\epsilon_0\, g(z)$.
\end{prop}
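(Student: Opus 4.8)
The plan is to reduce the statement to a purely topological fact about the base $B:=(\mathbb{CP}\times\C)\setminus\Sigma$ of the monodromy fibration, namely that the inclusion of the fibre $F_{\epsilon_0}:=(\C\setminus\Sigma_{\epsilon_0})\times\{\epsilon_0\}$ into $B$ is surjective on fundamental groups. This indeed suffices: once $\pi_1(B)$ is generated by the image of $\pi_1(F_{\epsilon_0})$, any loop $\gamma$ in $B$ is homotopic in $B$ to a finite concatenation of loops each lying in $F_{\epsilon_0}$, and since $F_{\epsilon_0}$ is path connected this concatenation is itself homotopic, rel a chosen base point of $F_{\epsilon_0}$, to a single loop contained in $F_{\epsilon_0}$.

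First I would turn $p_2$ into a genuine fibration away from a finite set. The restriction $p_2|_{\Sigma}\colon\Sigma\to\C$ is, off the zeros of the leading coefficient $c(\epsilon)$, a finite morphism of algebraic curves, and its branch values together with those zeros form exactly the finite set $p_2(\Sigma_0)$. Over $\C\setminus p_2(\Sigma_0)$ the curve $\Sigma$ is therefore a disjoint union of graphs $t=t_j(\epsilon)$ — i.e.\ the critical values of $f+\epsilon g$ move without colliding — so Ehresmann's lemma~\cite{E}, applied to the proper projection $\mathbb{CP}\times\bigl(\C\setminus p_2(\Sigma_0)\bigr)\to\C\setminus p_2(\Sigma_0)$ and the subvariety $\Sigma$, shows that the pair is locally trivial; removing $\Sigma$ gives that $p_2\colon B'\to\C\setminus p_2(\Sigma_0)$ is a locally trivial fibration with connected fibre $\C\setminus\Sigma_\epsilon$, where $B':=p_2^{-1}(\C\setminus p_2(\Sigma_0))$. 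As $\C\setminus p_2(\Sigma_0)$ is a $K(\pi,1)$, the homotopy exact sequence shows that $\pi_1(F_{\epsilon_0})\to\pi_1(B')$ is injective with image the kernel of $\pi_1(B')\to\pi_1(\C\setminus p_2(\Sigma_0))$; hence $\pi_1(B')$ is generated by the image of $\pi_1(F_{\epsilon_0})$ together with lifts of the small loops $\lambda_{\epsilon_1}$ encircling the points $\epsilon_1\in p_2(\Sigma_0)$.

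The next step is to see that these extra generators disappear in $B$. Given $\gamma\subset B$, a small perturbation within $B$ (possible since $p_2$ is a submersion) arranges that $p_2\circ\gamma$ avoids the finite set $p_2(\Sigma_0)$, i.e.\ that $\gamma\subset B'$. Moreover $B\setminus B'=\bigcup_{\epsilon_1}\bigl(\{\epsilon_1\}\times(\C\setminus\Sigma_{\epsilon_1})\bigr)$ is a finite union of complex curves, hence of real codimension two in the complex surface $B$, so the inclusion $B'\hookrightarrow B$ is onto on $\pi_1$. It remains to prove that a lift of $\lambda_{\epsilon_1}$ bounds a disc in $B$, which is a local question at a point of $\Sigma_0$: in a small bidisc around such a point $\Sigma$ is (in the generic case) a fold $\{(t-t_1)^2=a(\epsilon-\epsilon_1)\}$ together with, away from $t_1$, finitely many disjoint graphs $\{t=t_j(\epsilon)\}$; an explicit coordinate change ($v=(t-t_1)^2-a(\epsilon-\epsilon_1)$ near the fold) shows that, restricted over a small disc around $\epsilon_1$, the fundamental group of the complement is generated by meridians of the removed curves, each of which can be pushed into a single fibre, whereas the loop ``$\epsilon$ runs once around $\epsilon_1$ at a fixed generic value of $t$'' is null-homotopic there. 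Doing this at each point of $\Sigma_0$ gives that $\pi_1(B)$ is generated by the image of $\pi_1(F_{\epsilon_0})$, and the proposition follows.

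The hard part is this last step: pinning down the local normal form of $\Sigma$ at the points of $\Sigma_0$ (including any non-fold singularities) and doing the bookkeeping that promotes ``lifts of $\lambda_{\epsilon_1}$'' to genuine null-homotopic loops in $B$ — in effect a Zariski--van Kampen argument for the curve $\Sigma$ relative to the projection $p_2$. What makes it work is that $\Sigma$ meets the fibres of $p_2$ only in folds and never contains a whole fibre $\{\epsilon=\mathrm{const}\}$; this is automatic when $\deg g<\deg f$, in which case $c(\epsilon)$ is a nonzero constant, and otherwise the zeros of $c$ must be counted among the excluded values $p_2(\Sigma_0)$.
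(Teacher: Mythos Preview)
Your argument is correct and in fact spells out a point that the paper leaves implicit. The paper's proof is much shorter: it applies Thom's first isotopy lemma to the projection $p_2\colon (\mathbb{CP}\times\C)\setminus\Sigma_0\to\C\setminus p_2(\Sigma_0)$ with the two-stratum decomposition $\Sigma\setminus\Sigma_0<W\setminus\Sigma$, obtains a stratified locally trivial fibration, takes an associated connection, and then simply asserts that one can ``deform the path by the connection without crossing $\Sigma$'' into the slice $\epsilon=\epsilon_0$. In spirit this is the same fibration you build with Ehresmann, but the paper does not isolate the residual issue you correctly identify: a loop whose $\epsilon$-projection winds nontrivially around a point of $p_2(\Sigma_0)$ cannot be pushed into a single fibre by parallel transport over $\C\setminus p_2(\Sigma_0)$ alone; one must use that in the larger space $B$ such lifts die. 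Your exact-sequence bookkeeping makes this explicit, which is a genuine gain in clarity.

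Where you make life harder than necessary is in the ``hard part''. You do not need any local normal form of $\Sigma$ at $\Sigma_0$, nor a Zariski--van Kampen computation. It suffices to observe that among all lifts of a small loop $\lambda_{\epsilon_1}$ you may choose the \emph{section} lift $\epsilon\mapsto(t_0,\epsilon)$ for any $t_0$ with $(t_0,\epsilon)\notin\Sigma$ on a small disc $D$ around $\epsilon_1$; this lift visibly bounds the disc $\{t_0\}\times D\subset B$, independently of how $\Sigma$ degenerates at $\Sigma_0$. Since any other lift differs from this one by an element of $\pi_1(F_{\epsilon_0})$, the generators coming from the base already lie in the image of $\pi_1(F_{\epsilon_0})$ inside $\pi_1(B)$, and you are done. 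With this simplification your proof is complete and, arguably, more transparent than the paper's one-line appeal to the connection.
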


\begin{proof}
The proof relies on Thom's first isotopy lemma (see e.g.~\cite[p.180]{Ads}), which we apply to 
the mapping
\[
p_2 \colon W\to \C\backslash p_2(\Sigma_0),
\]
%. We check that the conditions of Thom's lemma are verified. 
where $W=(\mathbb{CP}\times \C)\backslash \Sigma_0$, is stratified as
\[
\Sigma\backslash\Sigma_0<W\backslash\Sigma.
\]
The map $H$ restricted to $W$ is proper and its restriction to $\Sigma$ is a submersion. Therefore, 
\[
p_2 \colon W\to \C\backslash p_2(\Sigma_0)
\]
is a topologicaly locally trivial fibration compatible with the stratification.
We consider a connection associated with this stratification. 

Now, consider a closed path in $W\backslash\Sigma$. Then, we can deform the path by the conection without crossing $\Sigma$ until we obtain a closed path 
in $\mathbb{CP}\times\{\epsilon_0\}$, with $\epsilon_0\not\in \Sigma_0$. 
\end{proof}

Note that the monodromy induced by the original closed path and the deformed path are conjugated. 

For fixed $\epsilon\in\mathbb{C}$, denote by $G_\epsilon$ the monodromy group of $f(z)+\epsilon g(z)$. 
The next result shows the relation between $G$ and $G_\epsilon$.

\begin{coro}\label{coro:G}
For every $\epsilon\in\mathbb{C}$ such that $c(\epsilon)\neq 0$, $G_\epsilon$ is a subgroup of $G$, up to conjugation, and for every $\epsilon \not \in 
p_2(\Sigma_0)$, $G_\epsilon$ is $G$, up to conjugation. Moreover, $G$ is the Galois group of the 
Galois extension of $\mathbb{C}(t,\epsilon)$ by the $n$ preimages $z_1(t,\epsilon),\ldots,z_n(t,\epsilon)$.
\end{coro}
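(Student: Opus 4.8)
The plan is to prove the three assertions in order, reducing the first to the fact that the fibration $H$ restricts, over the slice $\{\epsilon=\epsilon_0\}$, to the covering defining $G_{\epsilon_0}$, the second to the Proposition fed into that picture, and the third to the standard identification of the monodromy group of a connected covering with a Galois group, exactly as was recalled above for a single polynomial.

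\emph{Step 1: $G_{\epsilon_0}\subseteq G$ up to conjugation when $c(\epsilon_0)\neq 0$.} For finite $t$, the discriminant $D(t,\epsilon_0)$ vanishes precisely when $f+\epsilon_0 g-t$ has a multiple root, i.e. when $t\in\Sigma_{\epsilon_0}$; hence the slice $L_{\epsilon_0}:=(\C\setminus\Sigma_{\epsilon_0})\times\{\epsilon_0\}$ lies in $\mathbb{CP}\times\C\setminus\Sigma$, and the restriction of $H$ to $H^{-1}(L_{\epsilon_0})$ is exactly the $n$-sheeted covering $z\mapsto f(z)+\epsilon_0 g(z)$ over $\C\setminus\Sigma_{\epsilon_0}$ whose monodromy is $G_{\epsilon_0}$ (here $\deg(f+\epsilon_0 g)=n$ because $c(\epsilon_0)\neq 0$). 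Picking a base point $(t_0,\epsilon_0)\in L_{\epsilon_0}$, the inclusion $L_{\epsilon_0}\hookrightarrow \mathbb{CP}\times\C\setminus\Sigma$ induces a homomorphism $\pi_1(L_{\epsilon_0},(t_0,\epsilon_0))\to\pi_1(\mathbb{CP}\times\C\setminus\Sigma,(t_0,\epsilon_0))$ under which the two monodromy representations act compatibly on the common fibre $H^{-1}(t_0,\epsilon_0)=(f+\epsilon_0 g)^{-1}(t_0)$. Therefore $G_{\epsilon_0}$, the image of the first group, is contained in $G$, the image of the second; moving the base point of $G$ elsewhere replaces this by containment up to conjugation.

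\emph{Step 2: equality when $\epsilon_0\notin p_2(\Sigma_0)$.} First observe that if $c(\epsilon_0)=0$ then $\C\times\{\epsilon_0\}\subseteq\Sigma$ and $p_2$ is constant on it, so $\epsilon_0\in p_2(\Sigma_0)$; hence $\epsilon_0\notin p_2(\Sigma_0)$ already forces $c(\epsilon_0)\neq 0$, and Step~1 gives one inclusion. For the other, let $\gamma$ be an arbitrary closed path in $\mathbb{CP}\times\C\setminus\Sigma$. By the Proposition, $\gamma$ is freely homotopic, inside $\mathbb{CP}\times\C\setminus\Sigma$, to a closed path lying in $L_{\epsilon_0}$; since homotopic loops induce conjugate monodromies (as noted after the Proposition), the monodromy of $\gamma$ lies in a conjugate of $G_{\epsilon_0}$. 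As $\gamma$ was arbitrary, $G$ is contained in a conjugate of $G_{\epsilon_0}$, and with Step~1 we conclude $G=G_{\epsilon_0}$ up to conjugation.

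\emph{Step 3: the Galois description.} Since $F(z,\epsilon)-t=f(z)+\epsilon g(z)-t$ has degree one in $t$, it is irreducible in $\C[z,\epsilon,t]$, hence, by Gauss's lemma, irreducible in $\C(t,\epsilon)[z]$; thus $H\colon\mathbb{CP}\times\C\setminus H^{-1}(\Sigma)\to\C^2\setminus\Sigma$ is a connected $n$-sheeted covering whose total space has function field $\C(t,\epsilon)(z_i)=\C(z,\epsilon)$. By the same argument as in the one-variable case recalled before the statement (cf. Theorem~8.12 of~\cite{F}), the monodromy group of a connected covering of a quasi-projective base is canonically identified, via the action of the deck group of its Galois closure on a fibre, with the Galois group of the splitting field over the function field of the base; applying this here yields $G=\Aut_{\C(t,\epsilon)}\C(z_1,\ldots,z_n)$. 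The computational content is light; the points needing care are the bookkeeping of base points in Steps~1--2, so that the ``up to conjugation'' clauses sit in the right places, and, in Step~3, checking that deleting the branch locus $\Sigma$ does not alter the Galois group — which is precisely the content of the monodromy–Galois dictionary, now applied over the two-dimensional base $\C^2\setminus\Sigma$ in place of $\C\setminus\Sigma'$. This last transfer is the main (mild) obstacle.
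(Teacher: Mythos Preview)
Your Steps~1 and~2 follow essentially the same approach as the paper: the inclusion $G_{\epsilon_0}\subseteq G$ comes from the inclusion of the slice $(\C\setminus\Sigma_{\epsilon_0})\times\{\epsilon_0\}$ into $\C^2\setminus\Sigma$, and the reverse inclusion for $\epsilon_0\notin p_2(\Sigma_0)$ comes from the preceding Proposition. Your extra observation that $c(\epsilon_0)=0$ forces $\epsilon_0\in p_2(\Sigma_0)$ is a useful sanity check that the paper leaves implicit.

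Your Step~3, however, takes a genuinely different route. The paper argues as follows: writing $\bar G=\Aut_{\C(t,\epsilon)}\C(z_1,\ldots,z_n)$, one has $G\subseteq\bar G$ because monodromies are field automorphisms; in the other direction, there is a specialization map $\bar G\to G_\epsilon$ (``restriction to a hyperplane''), and a commutative-diagram argument over two generic values $\epsilon,\epsilon'\notin p_2(\Sigma_0)$ shows that an element of $\bar G$ specializing to the identity in one $G_\epsilon$ must do so in all $G_{\epsilon'}$, hence is the identity. This gives injectivity of $\bar G\to G_\epsilon=G$ and thus $\bar G=G$. Your approach instead establishes irreducibility of $f(z)+\epsilon g(z)-t$ in $\C(t,\epsilon)[z]$ (via its degree one in $t$ and Gauss's lemma), deduces connectedness of the covering, and then invokes the monodromy--Galois dictionary directly over the two-dimensional base, in exact analogy with the one-variable case already cited from~\cite{F}. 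Your argument is arguably more self-contained and avoids the somewhat delicate specialization step $\bar G\to G_\epsilon$; the paper's argument, on the other hand, stays closer to the objects already in play ($G_\epsilon$ for varying $\epsilon$) and does not need to re-examine irreducibility or connectedness. Both are correct.
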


\begin{proof}
Let us consider a base point $(t_0,\epsilon_0) \in \C^2 \setminus \Sigma$ and let $G$ be the monodromy group induced by closed paths with that base point. 
The monodromy group $G_{\epsilon_0}$ is generated by the closed paths based at $t_0$, which are in the hyperplane $(\C \setminus \Sigma_{\epsilon_0}) 
\times \{ \epsilon_0\} \subset \C^2 \setminus \Sigma$. Hence, they generate the same element in $G$ and obviously $G_{\epsilon_0} \subset G$. 

On the other hand, by the previous proposition, closed paths in $\mathbb{CP}\times\C \setminus \Sigma$ based at $(t_0,\epsilon_0)$ with 
$\epsilon_0\not\in p_2(\Sigma_0)$ can be continuously deformed into closed paths in the hyperplane $\{\epsilon = \epsilon_0\}$. Therefore, permutations 
corresponding to closed paths in $G$ also belong to $G_{\epsilon_0}$, so $G\subset G_{\epsilon_0}$.

Let $\bar G$ denote the Galois group of the 
Galois extension of $\mathbb{C}(t,\epsilon)$ by the $n$ preimages $z_1(t,\epsilon),\ldots,z_n(t,\epsilon)$, that is
\[
\bar G = \Aut_{\C(t,\epsilon)} (\C(z_1(t,\epsilon),\ldots,z_n(t,\epsilon))).
\]
Since the elements of $G$ are automorphisms of the fibers, then $G$ is a subgroup of $\bar G$. 
On the other hand, we have the projection $\bar G\to G_\epsilon$, as any automorphism 
can be restricted to a hyperplane. Take $\epsilon,\epsilon'\not\in p_2(\Sigma_0)$, then 
the following commutative diagram holds
\[
\begin{matrix}
G_\epsilon & & \longleftrightarrow & & G_{\epsilon'}\\
&\nwarrow &   & \nearrow &\\
&  & \bar G &  & 
\end{matrix}
\]
If the image of an element of $\bar G$ is the identity element of $G_\epsilon$, 
then the diagram above proves that it is the identity element of $G_{\epsilon'}$ 
for every $\epsilon'\not\in p_2(\Sigma_0)$ and, in consequence, the element of $\bar G$ is
the identity element. Therefore, the morphism $\bar G\to G_\epsilon =G$ is
injective, and $\bar G$ and $G$ are conjugated. 

\end{proof}

\subsection{Proof of the main result}
In order to prove Theorem~\ref{theo:main}, we distinguish different cases, given by the following proposition.
We will follow \cite{CM} (see also \cite{CL} or \cite{ABM1} for more details).

A permutation group $G$ acting on $X$ is said to be {\em imprimitive} if there exists a proper subset $B\subset X$, $\card(B)>1$, such that for any 
$\sigma \in G$, $\sigma(B)\cap B=\varnothing$ or $\sigma(B)=B$. If $G$ is not imprimitive, then, it is called {\it primitive}.

\begin{prop}\label{prop:BS}
Let $f,g\in\mathbb{C}[z]$ and let $G$ be the monodromy group of $f+\epsilon g$. Then, one of the following 
cases holds:
\begin{enumerate}[(i)] 
 \item\label{c1} $G$ is two transitive.
 \item $G$ is isomorphic to the monodromy group of $z^p$ with $p$ prime.
 \item $G$ is isomorphic to the monodromy group of $T_p(z)$ with $p$ prime.
 \item\label{c4} $G$ is imprimitive.
\end{enumerate}
\end{prop}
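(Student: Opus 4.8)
The plan is to invoke the classification of monodromy groups of polynomials, which is classical and due to Burnside--Schur together with Ritt's theory of polynomial decomposition. I would phrase the argument as follows.

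\medskip

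First, I observe that by Corollary~\ref{coro:G}, the group $G$ is (up to conjugation) the monodromy group $G_{\epsilon_0}$ of the polynomial $F_{\epsilon_0}(z) := f(z) + \epsilon_0 g(z)$ for any $\epsilon_0 \notin p_2(\Sigma_0)$; in particular $G$ is the monodromy group of a genuine polynomial of degree $n$. Hence it suffices to prove the statement for $G = G_P$, the monodromy group of a single polynomial $P \in \C[z]$ of degree $n$ acting on the generic fibre $P^{-1}(t_0)$, which has cardinality $n$. Recall that $G_P$ is transitive, and moreover it is generated by the local monodromies around the critical values; each such generator is a product of cycles, and the generator coming from a critical value over which only one pair of sheets collides is a \emph{transposition} when $P$ has a generic critical point. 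In general $G_P$ always contains an element that is an $n$-cycle, namely the monodromy around infinity (since $P$ is totally ramified at $\infty$). This last fact is the structural input that makes the classification so restrictive.

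\medskip

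The core step is then to apply the classification of transitive permutation groups containing an $n$-cycle. A transitive group $G$ acting on $n$ points is either primitive or imprimitive. If $G$ is imprimitive we land in case~\eqref{c4} and there is nothing more to prove. If $G$ is primitive, then since it contains an $n$-cycle we may invoke the classical result (Burnside for $n$ prime, and the Schur/Ritt analysis in general, see \cite{CM} and also \cite{CL,ABM1}) that a primitive permutation group of degree $n$ containing an $n$-cycle is one of: a two-transitive group; the cyclic group of prime order $p$ (realized as the monodromy group of $z^p$); or the dihedral group of order $2p$ with $p$ prime (realized as the monodromy group of the Chebyshev polynomial $T_p$). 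These are exactly cases \eqref{c1}, (ii) and (iii). Combining the primitive and imprimitive alternatives gives the four listed cases, completing the proof.

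\medskip

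I expect the only real subtlety to be bookkeeping rather than mathematics: one must be careful that the ``group'' in Corollary~\ref{coro:G} is identified, as an abstract permutation group acting on the $n$-element fibre, with the monodromy group of an honest polynomial, so that the polynomial classification applies verbatim; and one must recall why $G_P$ contains an $n$-cycle (total ramification of a polynomial at $\infty$) to rule out the primitive-but-not-two-transitive groups that do not appear in the list. Both points are standard and can be cited from \cite{CM} and the references therein, so I would keep the proof short and essentially reduce it to quoting the permutation-group classification. The harder work — translating cases (ii), (iii), \eqref{c4} into actual composition factors for $f$ and $g$ — is deferred to the subsequent case-by-case analysis in the proof of Theorem~\ref{theo:main}.
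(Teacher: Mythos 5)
Your proposal is correct and follows essentially the same route as the paper: both obtain an $n$-cycle in $G$ from the monodromy at infinity (total ramification of the polynomial), apply the Burnside--Schur classification to split into the imprimitive, two-transitive, and $\Aff(p)$ cases, and then use the argument of \cite{CM} (Ritt-type analysis) together with Corollary~\ref{coro:G} to identify the $\Aff(p)$ case with the monodromy of $z^p$ or $T_p$. The only cosmetic difference is that you invoke Corollary~\ref{coro:G} at the outset to reduce to a single polynomial $f+\epsilon_0 g$, whereas the paper works with the family's group $G$ directly and uses the corollary only in the affine case.
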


\begin{proof}
Let $c_k(\epsilon)$, $k=0,\ldots,n:=\max(\deg f,\deg g)$, be the coefficients of the polynomial $f(z)+\epsilon g(z)$ as a polynomial in $z$. That is,
\[
f(z)+\epsilon g(z)=c_n(\epsilon) z^n+c_{n-1}(\epsilon)z^{n-1}+\ldots+c_0(\epsilon),
\]
where $c_k$ are affine in $\epsilon$. 

%Let $G$ be the monodromy group of $f+\epsilon g$. 
Choose $\epsilon_0$ such that $c_n(\epsilon_0)\neq 0$. If $t$ is large, then $z_k(t,\epsilon_0)$ can be expanded as 
\[
z_k(t,\epsilon_0)=\omega^k c(\epsilon_0)^{1/n} t^{1/n} + O(t^{1/n-1}),
\]
where $\omega$ is a primitive nth-root of unity. Hence, if we consider a cycle with $t$
large enough, it produces a cyclic permutation of the $z_k$. Therefore,
there exists a cycle $(1,\ldots,n)\in G$.

Now, applying Burnside-Schur theorem~(see \cite{DM}), either $G$ is imprimitive or, two transitive, or permutationally isomorphic to the affine 
group $\Aff(p)$, where $p$ is a prime. If $G$ is two-transitive, then we are in case \eqref{c1}. Assume $G$ is permutationally isomorphic to the 
affine group $\Aff(p)$, where $p$ is a prime.

Arguing as in \cite{CM}, we obtain that for any fixed $\epsilon$, the monodromy group of $f+\epsilon g$ is either isomorphic to the monodromy group 
of $z^p$ or to the monodromy group of $T_p$, for $p$ prime. By Corollary~\ref{coro:G}, the same holds for $G$ for any $\epsilon\not\in p_2(\Sigma_0)$.

\end{proof}

The case $G$ two-transitive is generic. In particular, it contains the case when all critical points are of Morse type with different critical values. 
We first solve the infinitesimal center problem in this case. 

\begin{prop}\label{2trans}
Assume that for some polynomials $f,g$, the group  $G$ is two-transitive. Let $C$ be a zero cycle of $f$. Then, $f+\epsilon g$ has an infinitesimal 
center for the cycle $C$ if and only if $f, g, C$ satisfy the composition condition.
\end{prop}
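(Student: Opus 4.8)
The plan is the following. Sufficiency of the composition condition has already been checked (in all cases), so it remains to prove necessity: assuming $\Delta(t,\epsilon)\equiv 0$, we must produce polynomials $\tilde f,\tilde g,h$ with $f=\tilde f(h)$, $g=\tilde g(h)$ and trivial projected cycle. First we pass to the algebraic framework of this section. Set $N=\deg(f+\epsilon g)$ as a polynomial in $z$, let $z_1(t,\epsilon),\dots,z_N(t,\epsilon)$ be all the roots of $f+\epsilon g=t$, and extend the cycle by putting $n_i=0$ for $m<i\le N$; this alters neither $C_\epsilon$ nor the relation $\sum_{i=1}^N n_i=0$, and $\Delta=\sum_{i=1}^N n_i f(z_i(t,\epsilon))$. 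As an algebraic function vanishing on an open set, $\Delta$ is the zero element of $L=\C(t,\epsilon)(z_1,\dots,z_N)$, whose Galois group over $K=\C(t,\epsilon)$ is $G$, up to conjugation, by Corollary~\ref{coro:G}. Applying each $\sigma\in G$ to $\Delta=0$ and relabelling the roots gives
\[
\sum_{j=1}^{N} n_{\sigma^{-1}(j)}\,f(z_j)=0\quad\text{in }L,\ \text{for every }\sigma\in G.
\]

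The central step is representation-theoretic, and is exactly where two-transitivity enters. Regard $v=(n_1,\dots,n_N)$ as an element of the permutation module $\C^N$ of $G$; since $\sum n_i=0$, it belongs to the augmentation submodule $W=\{x\in\C^N:\sum_i x_i=0\}$. For a two-transitive group the permutation module splits as $\C\cdot(1,\dots,1)\oplus W$ with $W$ \emph{irreducible} (see~\cite{DM}); hence the $G$-submodule $U$ generated by $v$ is either $0$ or all of $W$. On the other hand, the displayed relations say precisely that the $L$-vector $(f(z_1),\dots,f(z_N))$ is orthogonal, for the standard pairing $\C^N\times L^N\to L$, to every $\sigma v$, hence to $U$.

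If $U=0$, then $v=0$, so $C$ is the zero cycle and the composition condition holds trivially (take $h=\id$, $\tilde f=f$, $\tilde g=g$). If $U=W$, then $(f(z_1),\dots,f(z_N))\perp W$; pairing with the vectors $e_i-e_j$ yields $f(z_i)=f(z_j)$ in $L$ for all $i,j$. Hence the polynomial $f(z)-f(z_1)$, of degree $\deg f\ge 1$, vanishes at the $N$ distinct roots $z_i$, which forces $\deg f=N$ and $f(z)-f(z_1)=\tfrac{a}{c(\epsilon)}\bigl((f+\epsilon g)(z)-t\bigr)$, where $a$ is the leading coefficient of $f$. Comparing the coefficients of $z$ and using that $f$ and $g$ do not depend on $\epsilon$ gives $g=\alpha f+\beta$ with $\alpha,\beta\in\C$. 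Then the composition condition holds with $h=f$, $\tilde f(w)=w$ and $\tilde g(w)=\alpha w+\beta$: indeed $f=\tilde f(f)$ and $g=\tilde g(f)$, while $f(z_i)=f(z_j)$ for all $i,j$ makes \eqref{eq:cc2} collapse to $\sum_i n_i=0$, which is true.

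The one delicate point is this last case $U=W$: one has to notice that the collapse $f(z_i)=f(z_j)$ for all $i,j$ simultaneously pins the degree ($\deg f=\deg(f+\epsilon g)$) and forces $g$ to be an affine function of $f$, and then that this apparently trivial deformation is still covered by the composition condition, via $h=f$. The remaining points — that enlarging $C$ to the full $N$-root cycle is harmless, and the elementary bookkeeping between $\deg f$, $\deg g$ and $N$ — are routine, and the dichotomy ``$0$ or $W$'' for $U$ is immediate from two-transitivity.
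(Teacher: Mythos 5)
Your proof is correct, and its first half is in essence the paper's argument seen through representation theory: the paper exploits two-transitivity by averaging the vanishing relation over the point stabilizers $G_i$ (getting $f(z_i)-\tfrac{1}{n-1}\sum_{j\neq i}f(z_j)=0$ for $n_i\neq 0$ and hence $f(z_1)=f(z_2)$), while you exploit it through the irreducibility of the augmentation module, concluding $f(z_i)=f(z_j)$ for \emph{all} $i,j$ whenever the cycle is nonzero; these are two faces of the same use of two-transitivity of the Galois group $G=\Aut_{\C(t,\epsilon)}L$ of Corollary~\ref{coro:G}. Where you genuinely diverge is the endgame: the paper finishes by invoking L\"uroth's theorem (as in the imprimitive case) to produce an intermediate field $\C(h(z_1))$ and a factorization $f+\epsilon g=F(h(z),\epsilon)$, a step it leaves rather condensed, whereas you avoid L\"uroth altogether by a direct degree/divisibility computation: since $f(z)-f(z_1)$ kills all $N$ distinct roots, $\deg f=N$ and $f(z)-f(z_1)=\tfrac{a}{c(\epsilon)}\bigl((f+\epsilon g)(z)-t\bigr)$, which forces $g=\alpha f+\beta$, and then the composition condition is exhibited explicitly with $h=f$ (all roots project to one point of weight $\sum_i n_i=0$). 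This buys two things: it makes the vague L\"uroth step unnecessary and it identifies exactly which deformations occur in the two-transitive case (trivial cycle, or $g$ affine in $f$), which is consistent with the paper since a genuine factorization with $\deg h>1$, $\deg_z F>1$ would make $G$ imprimitive. Your auxiliary points (padding the cycle with zero weights, the trivial-cycle case via $h=\id$, and passing from the analytic vanishing of $\Delta$ to the relation in $L$) are all sound.
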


\begin{proof} 
Fix $\epsilon_0\not\in \Sigma_0$. For any fiber $z_i(t,\epsilon_0)$, denote $G_i$ the stabilizer of the fiber. Acting on the zero cycle by $G_i$, 
since $G$ is two-transitive, we get
\[
0=\sum_{\sigma\in G_i} \sum_{j} n_{\sigma(j)} f(z_j)= |G_i| n_i f(z_i) +|G_i|\frac{\sum_{j\neq i} n_j}{n-1}\sum_{j\neq i} f(z_j),
\]
where $n$ is the degree of $f+\epsilon g$ as a polynomial in $z$.
Since $C$ is a cycle then $n_i+\sum_{j\neq i} n_j=0$. Assume that $n_i\neq 0$, dividing by $n_i|G_i|$ in the previous equation, we obtain
\[
0=f(z_i)-\frac{1}{n-1}\sum_{j\neq i} f(z_j).
\]
Assume that $n_1,n_2\neq 0$ (reordering the roots if necessary). Then,
\[
\begin{split}
0&=\left(f(z_1)-\frac{1}{n-1}\sum_{j\neq 1} f(z_j)\right)-\left(f(z_2)-\frac{1}{n-1}\sum_{j\neq 2} f(z_j)\right)
\\&=\frac{n}{n-1}f(z_1)-\frac{n}{n-1}f(z_2).
\end{split}
\]
Arguing as in Proposition~\ref{prop:BS}, by Lüroth's theorem, $L=\C(h(z_1))$, so we obtain $f+\epsilon g=F(h(z),\epsilon)$ for a certain 
$F\in \mathbb{C}[z,\epsilon]$, affine in $\epsilon$, and the composition condition holds.
\end{proof}

In the proof of Propostion~\ref{2trans}, we have not used essentially that $f$ and $g$ are polynomials, so the proposition could be generalized 
to the class of rational functions. 

\medskip

\begin{proof}[Proof of Theorem~\ref{theo:main}]
As we mention is the introduction, the composition condition is 
sufficient for $f+\epsilon g$ to have an infinitesimal center for the cycle $C$. 

Now, assume that the deformation $f+\epsilon g$ has an infinitesimal center for the cycle $C$. 
By Proposition~\ref{prop:BS}, the monodromy group $ G$ of $f+\epsilon g$ 
is either two-transitive, equivalent to 
a monomial $z^p$ or a Chebyshev polynomial $T_p$ with $p$ prime,
or  is imprimitive. If $G$ is two-transitive, we conclude by Proposition~\ref{2trans}. We now deal with each of the remaining cases. 

Assume that $G$ is isomorphic to the monodromy group of $z^p$ or $T_p$, with $p$ prime. In the first case, 
$f+\epsilon g$ has a unique critical value for every $\epsilon$ (as $G_\epsilon$ is a subgroup of $G$), and
therefore, $f(z)+\epsilon g(z)=K(\epsilon)(z-a(\epsilon))^p$ for some functions $K,a$. But equaling 
the coefficients, $a$ must be constant, and $K(\epsilon)$ affine, so $f(z)+\epsilon g(z)=K(\epsilon) (z-a)^p$. In the second case,
arguing analogously, the two critical values must remain constant and therefore, $f$ and $g$ 
are multiple of the same Chebyshev polynomial.  
Note that in all cases, the cycle projected by the composition factor is trivial. To resume, we have obtained that if  the group $G$ is primitive, 
then the cycle can by projected to a trivial one by $f$. 

Assume that $G$ is imprimitive. 
Then there exists a subset of fibers $B$ such that for every $\sigma\in G$, we have either $\sigma(B)\cap B=\varnothing$ or $\sigma(B)=B$. Assume $B$ contains the fiber $z_1$. Denote $G_1$ and $G_B$ the stabilizers of $z_1$ and $B$, respectively. We have 
the groups inclusions
\[
G_1\subset G_B\subset G.
\]
Since  $G$ is the Galois group of $f+\epsilon g$, by the fundamental theorem of Galois theory, we have the inclussions of fields
\[
\C(\epsilon)(t)\subset L:=\C(\epsilon)(z_1,\ldots,z_n)^{G_B}\subset \C(\epsilon)(z_1).
\]
Now, applying Lüroth's theorem~(see e.g. \cite{W}), we obtain $f+\epsilon g=F \circ h$ for certain $F, h \in \mathbb{C}(z,\epsilon)$, where $F,h$ have degree $>1$ in $z$. As $F\circ h$ is a polynomial, then for every $\epsilon$ the preimage of $\infty$ by $h$ is a point, $\alpha(\epsilon)$, and the preimage of $\alpha(\epsilon)$ by $F$ is again $\infty$. Then, composing with a convenient Möebius function (see, for instance, Lemma~3.5 of \cite{CM}), we can assume that $\alpha(\epsilon)\equiv \infty$, so $F,h \in \mathbb{C}[z,\epsilon]$. Since the composition $F(h(z,\epsilon),\epsilon)$ has degree one in $\epsilon$, then either $F$ or $h$ does not deppend on $\epsilon$. Moreover, as $\deg F>1$, $F$ must be affine in $\epsilon$, that is, $F(z,\epsilon)=F_0(z)+\epsilon F_1(z)$, for certain $F_0,F_1\in\mathbb{C}[z]$. 
%Now, as $F_0\circ h=f$ and $F_1\circ h=g$, there exists a Möebius function $l$ such that $\tilde f=F_0\circ l$, $\tilde g=F_1\circ l$, and $\tilde h=l^{-1}\circ h$ 
%are polynomials satisfying $f=\tilde f\circ \tilde h$, $g=\tilde g\circ \tilde h$.

That is,  $f(z)+\epsilon g(z)=F(h(z),\epsilon)$, were $F,h$ are polynomials  and $F$ is affine in $\epsilon$, 
and consider the projected cycle $h(C_\epsilon)$. We consider the projected problem, that is,
\[
0\equiv \int_{C_\epsilon(t)} f=\int_{h(C_\epsilon(t))} F(\cdot,0).
\]
where the degree of $F(\cdot,0)$ is strictly lower than the degree of $f$. 

\medskip 

If the projection $h(C_\epsilon)$ is trivial we conclude. 

\medskip 

If not, we continue projecting until the monodromy group of the projected problem is primitive or until we get a trivial projection. In the first case, 
as proved above, the cycle can be proyected to a trivial one, so we assume we are in the second case. Assume that $h,h_1,h_2,\ldots,h_k$ are the succesive 
projections, that $F,F_1,F_2,\ldots,F_k$ are the succesive composition factors, and that $h_k(h_{k-1}(\ldots h_1(h(C_\epsilon))))\equiv 0$. 
Then, 
\[
f(z)+\epsilon g(z)=F_k((h_k\circ h_{k-1}\circ \ldots \circ h_1\circ h)(z),\epsilon),
\] 
and the cycle projected by $h_k\circ h_{k-1}\circ \ldots \circ h_1\circ h$ is trivial, so it satisfies the composition condition.

\medskip

\begin{comment}
\medskip 

To conclude, we need to prove that $h$ does not deppend on $\epsilon$. Firstly, 
note that if $g$ is affine, then $G_\epsilon=S_n$ for any $\epsilon$ small enough, since 
after the perturbation, all the singular points are of Morse type associated to 
different singular values. In particular, $G$ is two-transitive.

Assume then that $g$ is of degree at least two. Let $k(z)$ be an involution ($g(z)=g(k(z))$). Then, 
\[
\begin{split}
f(z)-f(k(z))&=f(z)+\epsilon g(z)-(f(k(z))+\epsilon g(k(z)))
\\&=
F(h(z,\epsilon),\epsilon)-F(h(k(z),\epsilon),\epsilon)
\\&=
\left(h(z,\epsilon)-h(k(z),\epsilon)\right)\tilde F(z,\epsilon),
\end{split}
\]
for certain rational function $\tilde F(z,\epsilon)$. That is, $h(z,\epsilon)-h(k(z),\epsilon)$ does not deppend on $\epsilon${It remains to justify why is this}. In particular, 
the coefficients of $h$ as a polynomial in $z$ must be independent of $\epsilon$ except for the independent term, that is, 
\[
h(z,\epsilon)=\tilde h(z)+h_0(\epsilon),
\]
where $\tilde h$ is a polynomial. If we denote $\tilde F(z,\epsilon):=F(z+h_0(\epsilon),\epsilon)$, then
\[
f+\epsilon g=\tilde F\circ \tilde h.
\]
\end{comment}
\end{proof}

\begin{rema}
Recall that the composition condition is not necessary for $g$ to be a solution 
of the {\em tangential} center problem. This is due to the fact that monodromy in the tangential 
center problem is just the monodromy group of $f$, and the problem is 
linear in $g$, so it is only required that the summands in $g$ satisfy the 
composition condition, while in the infinitesimal
center problem, we are considering a group that contains both the monodromy
groups of $f$ and $g$, and therefore, composition factors considered must be common
composition factors of $f$ and $g$. (See Example \ref{exam:1}.)
\end{rema}

\section{Examples and applications}

In this section, we show some examples and applications to illustrate 
the problem and its relation with dynamical systems. The first two
examples show that the problem is not trivial, even in simple cases.
The third example introduces the infinitesimal problem 
for planar vector fields and shows how, for certain planar fields,
the one dimensional problem reduces to a zero dimensional problem.
The last two examples shows the same but for the Abel equation. 
Moreover, in the last example we shall apply the results of 
this paper to obtain a new proof of a recent result. 

\begin{exam}\label{exam:1}{\bf Tangential problem.}
Consider $f(z)=z^6$, and a cycle 
\[
C(t)=\sum_{j=0}^5 n_j z_j(t),\quad z_j(t)=t^\frac{1}{6} e^{j\frac{2\pi i}{6}},
\] 
where $n_0=2$, $n_1=1$, $n_2=-1$, $n_3=-2$, $n_4=-1$, $n_5=1$.

There are two possible decompositions of $f$, with factors $h_1(z)=z^3$ and $h_2(z)=z^2$. Consider a perturbation with $g(x)=z^3+z^2=h_1(z)+h_2(z)$.

The projection of $C(t)$ by $h_1$ consists of identifying the roots $z_j$ such that $h_1(z_j)$ has the same value 
and assign the sum of the weights. That is,
\[
h_1(C(t))=(n_0+n_2+n_4) t^{1/2} + (n_1+n_3+n_5) (-t^{1/2})=0.
\]
Analogously, $h_2(C(t))$ is a trivial cycle.

Recall that 
\[
\Delta_\epsilon(t)=-\epsilon \int_{C(t)} g+O(\epsilon^2).
\]
It is known (see e.g.~\cite{ABM1}) that for this election of $f,g$ and $C$, the return map 
at first order is identically zero, as 
\[
\int_{C(t)} g=\int_{C(t)} h_1+h_2=\int_{C(t)} h_1 + \int_{C(t)} h_2=\int_{h_1(C(t))} z + \int_{h_2(C(t))} z=0.
\]
%That is, $\int_{C(t)} g(z)\equiv 0$ (note that it can be checked by direct computation).

On the other hand, when calculating 
\[\Delta(t)=-\epsilon\int_{C_\epsilon}g=-\epsilon\int_{C_{\epsilon}}h_1+h_2,\] 
but $C_\epsilon$ is a cycle of $f+\epsilon g$, which has no non-trivial composition factors, so the above argument does not work.

Indeed, as we will prove, the second Melnikov function is not zero, so 
the solution for the tangential problem is not a solution of the infinitesimal problem.
Indeed, if we derive in $f(z_j(t,\epsilon))+\epsilon g(z_j(t,\epsilon))=t$ with respect to $\epsilon$, we obtain
\[
\frac{\partial z_j}{\partial \epsilon}(t,\epsilon)=\frac{-g(z_j(t,\epsilon)) g'(z_j(t,\epsilon))}{f'(z_j(t,\epsilon))}.
\]
Then, $\Delta_\epsilon(t)=\epsilon^2 M_2(t) /2+O(\epsilon^3)$, where $M_2$ is obtained differentiating $\int_{C_\epsilon(t)} g(z)$
with respect to $\epsilon$, obtaining
\[
M_2(t)=\int_{C(t)} \frac{-g(z) g'(z)}{f'(z)}.
\]
But, replacing the roots by its value,
\[
M_2(t)=-\sum_{i=0}^6 n_j \frac{g(z_j(t)) g'(z_j(t))}{f'(z_j(t))}=-\frac{17}{12 t}.
\]
Therefore, it is not an infinitesimal center.

\end{exam}

\begin{exam}\label{exam:2}{\bf A simple monomial perturbation.}
Consider $f(z)=z^4$, a perturbation of the form $g(x)=a z^4+b z^2 + c$, and a generic cycle 
\[C(t)=\sum_{j=0}^3 n_j z_j(t),\quad z_j(1)=i^j.
\] 

By direct computation, it is easy to obtain that the roots of $f(z)+\epsilon g(z)=t$ are
\[
z_j(t,\epsilon)=(-1)^{\lfloor j/2\rfloor}\sqrt{\frac{-\epsilon b+(-1)^j \sqrt{\epsilon^2b^2-4(\epsilon c-t)(1+\epsilon a)} }{2(1+\epsilon a)}}.
\]
Then, 
\[
\Delta_\epsilon(t)=\sum_{j=0}^3 n_j f(z_j(t,\epsilon))=\frac{\epsilon b (-n_0+n_1-n_2+n_3)\sqrt{\epsilon^2b^2-4(\epsilon c-t)(1+\epsilon a)} }{2(1+
\epsilon a)^2}.
\]
That is, if $b=0$ or $n_1+n_3=n_0+n_2$, then $\Delta_\epsilon(t)\equiv 0$ for every $t,\epsilon\in\mathbb{C}$. In the first case, $g(z)=a f(z)+c$, 
and in the second case, if we denote $h(z)=z^2$, then $f(z)=h^2(z)$, $g(z)=a h^2(z)+b h(z)+c$, and, since $n_0+n_1+n_2+n_3=0$, then $n_2+n_0=0$ and 
$n_1+n_3=0$. In particular, the projection of $C_\epsilon$ by $h$ is identically zero. 
\end{exam}

\begin{exam}{\bf Infinitesimal center problem and hyper-elliptic planar systems.}
Let us consider a perturbed center problem in the plane, that is, consider
$F\in\mathbb{C}[x,y]$, a polynomial one-form $\omega$, and the deformation of
the foliation $F(x,y)=t$, 
\[
dF+\epsilon \omega=0.
\]
Take a regular value $t_0$ of $F$ and $\gamma(t_0)\subset F^{-1}(t_0)$ a closed path. Let $T$ be 
a transversal section to the leaves of $F$ parametrized by $t$. Consider the one cycle $\gamma_\epsilon(t)$
obtained by deformation 
of $\gamma(t_0)$ with respect to $t$ and $\epsilon$. 
Note that $\gamma_\epsilon(t)$ is not necessarily closed, as we are considering it 
up to the first intersection with $T$.
Let $\Delta_\epsilon$ be the associated displacement map 
\[\Delta_\epsilon(t)=\int_{\gamma_\epsilon(t)} dF=-\epsilon \int_{\gamma_\epsilon(t)} \omega.\] 
Then, the deformation preserves the center (foliation with closed paths) defined by $\gamma(t_0)$ if and 
only if $\Delta(t,\epsilon)\equiv 0$, for every $t,\epsilon\in\mathbb{C}$.

\medskip

Now, assume that we are in the hyper-elliptic case in the real plane, that is $F(x,y)=y^2+f(x)$, $f\in\mathbb{R}[x]$. Consider a transversal section {$T$} in the axis $y=0$  and $x_0,t_0=f(x_0)$ such 
that $f'(x_0)\neq 0$ and the curve $\gamma(t_0)$ is closed. As for $\epsilon=0$ the foliation defined
by $F(x,y)=t$ consists of closed curves, there exists a first time $t_1(t_0)>0$ such that $\gamma(t_1)$ belongs
to the axis $y=0$. We may extend this function to $t,\epsilon$ by the implicit function theorem. Moreover, 
as $\gamma(-t_1)$ also belong to the axis $y=0$, we also define a function $t_2(t,\epsilon)$ as the first negative time such that $\gamma(t_2)$ intersects the axis $y=0$. Then, the displacement map can be written as the zero dimensional
integral
\[
\begin{split}
\Delta_\epsilon(t)&=\int_{\gamma_\epsilon(t)} dF=\int_{(x(t_1,\epsilon),0)}^{(x(t_2,\epsilon),0)}dF
=F(x(t_2,\epsilon),0)-F(x(t_1,\epsilon),0)\\&=f(x(t_2,\epsilon))-f(x(t_1,\epsilon)).
\end{split}
\]
The tangential version of this problem has been solved in~\cite{PM} for a vanishing cycle with a 
Morse point and in~\cite{GP} for a vanishing cycle in general. Note that in this problem the 
deformation is not polynomial, so we can not directly apply our results.

%Then it is known (see e.g.~\cite{Z}), that any one-form $w$ is relatively cohomologous to the form
%\[
%w=g(x,y)dF+dR(x,y)+k(x)ydx,
%\]
%where $g,R\in\mathbb{C}[x,y]$, and $k(x)\in \mathbb{C}[x]$.
%Therefore,
%\[
%\Delta_\epsilon(t)=\int_{\gamma_\epsilon(t)} k(x) y\,dx.
%\]

\end{exam}

\begin{exam}{\bf Moment problem.}
Around 2000, in a series of papers~\cite{BFY98,BFY99,BFY00,BFY00-2}, Briskin, Françoise and Yomdim proposed the problem 
of determing the trigonometric polynomials $a,b$ such that the 
following family of Abel differential equations has a center at the
origin for every $\epsilon\in\mathbb{R}$
\begin{equation}\label{eq:Abel}
x'=a(t) x^2+\epsilon b(t) x^3,%\quad t\in[0,1],
\end{equation}
where an equation of the family is said to have a center at the origin if 
every bounded solution is closed. 

A necessary condition, called \emph{composition condition} \cite{ALl}, is the existence of functions, $\tilde A,\tilde B, h$,
with $h(0)=h(2\pi)$, such that 
\[A(t):=\int_0^t a(s)\,ds=\tilde A(h(t)),\quad B(t):=\int_0^t b(s)\, ds=\tilde B(h(t)).\]

Let us denote $x(t,x_0,\epsilon)$ the solution of \eqref{eq:Abel} determined by 
the initial condition $x(0,x_0,\epsilon)=0$. Assume that for $\epsilon=0$, \eqref{eq:Abel}
has a center at the origin. That is $\int_0^{2\pi} a(t)\,dt=0$. 
Differentiating $x^{-1}(t,x_0,\epsilon)$
with respect to $t$, evaluating at $t=2\pi$ and denoting $t=1/x(0)$, we obtain 
that \eqref{eq:Abel} is a center for every $\epsilon$
if and only if
\begin{equation}\label{eq:AbelInf}
\int_0^{2\pi} \frac{b(t)}{t-\int_0^t a(s)+\epsilon b(s)x(s)\,ds}\,dt\equiv 0,\quad \text{ for every }\epsilon,t\in\mathbb{R}.
\end{equation}
By the change of variables $t\mapsto z=\exp(it)$, and complexifying the variables,
\[
\oint_{|z|=1} \frac{b(z)}{t-\int_0^z a(w)+\epsilon b(w)x(w)\,dw}\,dz\equiv 0,\quad \text{ for every }\epsilon,t\in\mathbb{C}.
\]
By Proposition~3.1 of \cite{ABC}, this is equivalent to 
\[
\sum n_i b(z_i(t,\epsilon))\equiv 0,\quad \text{ for every }\epsilon\in\mathbb{R},t\in\mathbb{C},
\]
where $z_i$ are the preimages of $t$ by $\int_0^z a(w)+\epsilon b(w)x(w)\,dw$,
and $n_i$ are related to the branches of $A^{-1}$. Obviously,
the functions involved are not polynomials, not even rational, but we expect some ideas developped here
could be used for that problem. 
\end{exam}

\begin{exam}{\bf Polynomial moment problem.}
The same problem can be considered when $a,b$ are polynomials 
and a closed solution is a solution $x(t)$ of \eqref{eq:Abel}
such that $x(0)=x(2\pi)$. This polynomial version of the moment problem has been recently 
solved by Pakovich~\cite{PSCC}, proving that if \eqref{eq:Abel} has a center for
every $\epsilon\in\mathbb{R}$, then $a,b$ satisfy the composition condition. 

Pakovich's solution of the polynomial moment problem is based strongly on the solution of the tangential version of the problem~\cite{PM}, 
which ask for centers {\it at first order in $\epsilon$}. More precisely, taking $\epsilon=0$ in \eqref{eq:AbelInf},
a necessary condition for \eqref{eq:Abel} to have a center for every $\epsilon$
is that 
\begin{equation}\label{eq:tc}
\int_0^{1} \frac{b(t)}{t-\int_0^t a(s)\,ds}\,dt\equiv 0,\quad \text{ for every }t\in\mathbb{R}.
\end{equation}
When \eqref{eq:tc} holds, we say that \eqref{eq:Abel} has a tangential center.

\medskip

A stronger condition is to consider $a(t)=a_0(t)+\epsilon b(t)$, and assume that \eqref{eq:tc} holds for 
every $\epsilon\in\mathbb{R}$. It was proposed by Cima, Gasull and Mañosas~\cite{CGM}. They call it \emph{highly persistent center}. They proved that if \eqref{eq:Abel} has a highly persistent 
center, then \eqref{eq:Abel} has a composition center. 

\medskip

Now, we show an alternative proof of this result, using Theorem~\ref{theo:main}.
Denote $f(z)=\int_0^z a(w)\, dw$, $g(z)=\int_0^z b(w)\, dw$, and $t=1/x(0)$. 
By Proposition~8.2 of \cite{ABM1}, 
\[
\int_0^{1} \frac{b(z)}{t-f(z)}\,dz=\int_{C(t)} g(z),
\]
where $C$ is the cycle
\[
C(t)=n_1\sum_{i=1}^{n_0} z_{a_i}(t)-n_0\sum_{i=1}^{n_1} z_{b_i}(t),
\]
where $z_{a_i}(t)$ are all the solutions of $f(z_{a_i}(t))=t$ with $z_{a_i}(t)$ close to $0$ for $t$ close to $f(0)$,
and analogously for $z_{b_i}(t)$ and $1$. 

Now, if $a(z)=a_0(z)+\epsilon b(z)$, then $f(z)=f_0(z)+\epsilon g(z)$, the cycle $C$ becomes $C_\epsilon$. 
So, if \eqref{eq:Abel} has a highly persistent center, then
\[
\int_{C_\epsilon(t)} g(z)\equiv 0,\quad \text{for every }t,\epsilon\in\mathbb{C}.
\]
By Theorem~\ref{theo:main}, there exists $F\in\mathbb{C}[\epsilon,z]$, affine in $\epsilon$, and $h\in \mathbb{C}[z]$, such that $f(z)+\epsilon g(z)=F(h(z),\epsilon)$ and $h(C_\epsilon(t))\equiv 0$. This implies 
that $h(0)=h(1)$, and that there exists $\tilde f,\tilde g$ such that $f=\tilde f\circ h$, $g=\tilde g\circ h$, so the composition condition holds.

\end{exam}

\end{document}